\documentclass[12pt]{article}
\usepackage{amsfonts}
\usepackage{amsmath}

 \addtolength{\oddsidemargin}{-0.50in}
    \addtolength{\evensidemargin}{-0.50in}
    \addtolength{\textwidth}{1.00in}

    \addtolength{\topmargin}{-0.50in}
    \addtolength{\textheight}{1.50in}
\newtheorem{theorem}{Theorem}

\newtheorem{lemma}[theorem]{Lemma}

\newtheorem{proposition}[theorem]{Proposition}

\newenvironment{proof}[1][Proof]{\textbf{#1.} }{\ \rule{0.5em}{0.5em}}

\begin{document}

\title{Central limit theorem for the     modulus of
 continuity of the Brownian local time  in $L^3(\mathbb{R})$}
\author{Yaozhong Hu, David Nualart\thanks{
D. Nualart is supported by the NSF grant DMS0604207.} \\
Department of Mathematics \\
University of Kansas \\
Lawrence, Kansas, 66045 USA}
\date{}
\maketitle
\begin{abstract}
The purpose of this note is to prove a central limit theorem for the
$L^3$-modulus of continuity of the Brownian local time using techniques of stochastic analysis. The main
ingredients of the proof are an asymptotic version of Knight's
theorem and the Clark-Ocone formula for the $L^3$-modulus  of the
Brownian local time.

\end{abstract}

\section{Introduction}

Let $\{B_{t},t\geq 0\}$ be a standard one-dimensional Brownian motion. We
denote by $\{L_{t}^{x},t\geq 0,x\in \mathbb{R}\}$ a continuous version of
its local time. The following central limit theorem for the $L^{2}$ modulus
of continuity of the local time has been recently proved:
\begin{equation}
h^{-\frac{3}{2}}\left( \int_{\mathbb{R}}(L_{t}^{x+h}-L_{t} ^{x})^{2}dx-4th%
\right) \overset{\mathcal{L}}{\longrightarrow }\frac {8} {\sqrt{3}} \left( \int_{\mathbb{R}}(L_{t}^{x})^{2}dx   \right)^{\frac 12}%
\eta ,  \label{b3}
\end{equation}%
where $\eta $ is a $N(0,1)$ random variable independent of $B$  and
$\mathcal{L}$ denotes the convergence in law. This result has been
first proved in \cite{CLMR} by using the method of moments. In
\cite{HN} we gave a simple proof based on Clark-Ocone formula and an
asymptotic version of Knight's theorem (see Revuz and Yor \cite{RY},
Theorem (2.3), page 524). Another simple proof of this result with
the  techniques of stochastic analysis has been given in \cite{R1}.

The purpose of this paper is to show a central limit theorem for the
modulus of continuity in $L^{3}(\mathbb{R})$ of the local time. The
main result is the following.
\begin{theorem}   \label{thm1}
\label{th1} For each fixed $t>0$
\[
\frac{1}{h^{2}}\int_{\mathbb{R}}(L_{t}^{x+h}-L_{t}^{x})^{3}dx \overset{\mathcal{L}}{\longrightarrow }
8\sqrt{3}\left( \int_{\mathbb{R}}(L_{t}^{x})^{3}dx\right) ^{\frac 12} \eta
\]%
  as $h$ tends to zero, where $\eta $ is a normal random variable with mean zero and variance one
that is independent of $B$.
\end{theorem}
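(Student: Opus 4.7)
The plan is to transplant the strategy of \cite{HN} from $L^2$ to $L^3$: use the Clark-Ocone formula to realise the centred functional as a stochastic integral, compute the limit of its quadratic variation, and then invoke the asymptotic Knight theorem of Revuz--Yor (\cite{RY}, Theorem (2.3), p.~524). Write
\[
F_h=\frac{1}{h^2}\int_{\mathbb{R}}(L_t^{x+h}-L_t^x)^3\,dx,\qquad V_t=\int_{\mathbb{R}}(L_t^x)^3\,dx,
\]
so the goal is $F_h\overset{\mathcal L}{\longrightarrow} 8\sqrt{3}\,V_t^{1/2}\,\eta$.

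First, I would apply the Clark-Ocone formula
\[
F_h-\mathbb{E}[F_h]=\int_0^t u_s^h\,dB_s,\qquad u_s^h=\mathbb{E}[D_sF_h\mid\mathcal{F}_s].
\]
Tanaka's formula yields the Malliavin derivative of the local time,
\[
D_s L_t^x=\mathrm{sgn}(B_t-x)-\mathrm{sgn}(B_s-x),\quad s\le t,
\]
so that $D_s(L_t^{x+h}-L_t^x)=2\bigl[\mathbf{1}_{(x,x+h]}(B_s)-\mathbf{1}_{(x,x+h]}(B_t)\bigr]$ and the chain rule gives
\[
D_sF_h=\frac{6}{h^2}\int_{\mathbb{R}}(L_t^{x+h}-L_t^x)^2\bigl[\mathbf{1}_{(x,x+h]}(B_s)-\mathbf{1}_{(x,x+h]}(B_t)\bigr]dx.
\]
A preliminary moment computation, exploiting that $\mathbb{E}\bigl[\int_{\mathbb R}(L_t^{x+h}-L_t^x)^3dx\bigr]$ is odd in $h$ and of order $o(h^2)$, shows that $\mathbb{E}[F_h]\to 0$, so the deterministic term drops out.

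The central step is to prove
\[
\int_0^t(u_s^h)^2\,ds\;\overset{\mathbb P}{\longrightarrow}\;192\,V_t,\qquad (8\sqrt{3})^2=192.
\]
For this I would use the Markov property at time $s$ to evaluate $u_s^h$ as a function of $B_s$ and of the local-time increments of the independent future Brownian motion, then square and integrate in $s$, apply the occupation-time formula $\int_0^t g(B_s)\,ds=\int_{\mathbb R}g(x)L_t^x\,dx$ to turn the time integral into a spatial integral against $L_t^x$, and finally use Brownian scaling to show that the $h^{-4}$-normalised product of two squared local-time increments concentrates on a narrow diagonal and produces a single spatial integral proportional to $(L_t^x)^3$, with constant $192=6^2\cdot c$, where $c$ is a scale-invariant moment integral of Brownian local-time increments.

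Once the limiting bracket $192\,V_t$ is identified (it is a continuous $\mathcal F_t$-measurable functional of the Brownian path), the asymptotic Knight theorem gives $\int_0^t u_s^h\,dB_s\overset{\mathcal L}{\longrightarrow}\sqrt{192\,V_t}\,\eta=8\sqrt{3}\,V_t^{1/2}\,\eta$ with $\eta\sim N(0,1)$ independent of $B$, which combined with $\mathbb{E}[F_h]\to 0$ concludes the argument. The main obstacle will be the bracket computation: in contrast to the $L^2$ case the integrand in $u_s^h$ carries a \emph{squared} local-time increment rather than a first power, so $(u_s^h)^2$ becomes a quadruple spatial integral whose principal contribution sits on a thin diagonal near $B_s$; extracting exactly $(L_t^x)^3$ with the correct constant requires delicate moment estimates for Brownian local-time increments together with uniform-integrability bounds to kill the off-diagonal remainder.
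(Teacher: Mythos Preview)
Your overall strategy---Clark--Ocone representation followed by the asymptotic Knight theorem---is exactly the paper's. But two genuine gaps would prevent your outline from closing.

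\medskip
\textbf{Missing cross-variation condition.} To get $\eta$ \emph{independent} of $B$ from the asymptotic Knight theorem you must also show that the cross-bracket with the driving Brownian motion vanishes, i.e.\ $\int_0^t u_s^h\,ds\to 0$ in probability (uniformly on compacts). You never mention this, yet it is a separate nontrivial step (the paper proves it via Garsia--Rodemich--Rumsey and the Barlow--Yor local-time estimates). Without it Knight only gives a mixed-normal limit, not one independent of $B$. Incidentally, $\mathbb E[F_h]$ is not merely $o(1)$: it is exactly $0$ for every $h$, since $F_h$ is an odd functional of $B$.

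\medskip
\textbf{The bracket computation is where the real structure lives.} Your plan to ``square $u_s^h$, apply the occupation-time formula and Brownian scaling'' underestimates what happens. When you actually carry out the conditional expectation $u_r^h=\mathbb E[D_rF_h\mid\mathcal F_r]$ (the paper does this by mollifying with $p_\varepsilon$ and letting $\varepsilon\downarrow 0$), the integrand splits into four pieces $\Phi_r^{(1)},\dots,\Phi_r^{(4)}$, two of which are \emph{not} small: $h^{-2}\int_0^t(\Phi_r^{(3)}+\Phi_r^{(4)})\,dB_r$ converges in $L^2(\Omega)$ to $12\gamma_t$, the derivative of the self-intersection local time, and a matching piece of $\Phi_r^{(1)}+\Phi_r^{(2)}$ converges to $-12\gamma_t$. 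These cancel at the level of the stochastic integral, but you cannot see this cancellation by staring at $(u_s^h)^2$; the paper isolates it by applying Tanaka's formula to the time-reversed Brownian motion and rewriting the squared local-time increment via backward It\^o integrals. Only after stripping off these $L^2$-convergent pieces does one obtain a residual martingale $M^h_t=48h^{-2}\int_0^t\Delta_{r,h}\,dB_r$ whose bracket can be shown (again via a further backward-It\^o decomposition, H\"older, and Barlow--Yor) to converge to $192\int_{\mathbb R}(L_t^x)^3\,dx$. Your ``thin-diagonal concentration'' heuristic does not supply any of this machinery, and in particular does not explain how the self-intersection terms disappear.
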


A similar central limit theorem  theorem  has been recently proved by Rosen in \cite{R2}    using the
method of moments.
 Here we  prove Theorem \ref{thm1}   using  the Clark-Ocone
stochastic integral representation   formula which allows us  to express  the random variable
\begin{equation}  \label{x1}
F_{t}^{h}=\int_{\mathbb{R}}(L_{t}^{x+h}-L_{t}^{x})^{3}dx
\end{equation}
as a stochastic integral. In comparison with the $L^2$ modulus of
continuity,  in this case  there are four different terms (instead of two),
and two of them are martingales.  Surprisingly, some of the terms of this representation converge
in $L^2(\Omega)$ to the  the derivative of the self-intersection local time
and the limits cancel out. Finally, there is a remaining martingale term
to which we can apply  the  asymptotic version of Knight's theorem.
As in the proof of    (\ref{b3}), to show the convergence of the
quadratic variation of this martingale and other asymptotic results
we make use of    Tanaka's formula for the time-reversed Brownian motion  and backward It\^{o} stochastic
integrals.   

We believe that a similar result could be established for the
modulus of continuity in $L^p(\mathbb{R})$ for an integer $p\ge 4$ using Clark-Ocone
representation formula, but the proof would be much more involved.

The paper is organized as follows. In the next section we recall
some preliminaries on Malliavin calculus. In Section 3    we
establish a stochastic integral representation for the derivative of
the self-intersection local time, which has its own interest,  and for the random variable
$F_{t}^h$ defined in (\ref{x1}).   Section 4 is devoted to the proof of Theorem
\ref{thm1}, and the Appendix contains two technical lemmas.

\section{Preliminaries on Malliavin Calculus}

Let us  recall  some basic facts on the Malliavin calculus with
respect the the Brownian motion $B=\{B_{t},t\geq 0\}$. We refer \ to
\cite{Nu06} for a complete presentation of these notions. \ We
assume that $B$ is defined on a complete probability space $(\Omega
,\mathcal{F},P)$ such that $\mathcal{F}$ is generated by $B$.
Consider the set $\mathcal{S}$ of smooth random
variables  of the form%
\begin{equation}
F=f\left( B_{t_{1}},\ldots ,B_{t_{n}}\right) ,  \label{h1}
\end{equation}%
where $\ t_{1},\ldots ,t_{n}\geq 0$, $f\in \mathcal{C}_{b}^{\infty }\left(
\mathbb{R}^{n}\right) $ (the space of bounded functions which have bounded
derivatives of all orders) and $n\in \mathbb{N}$. The derivative operator $D$
on a smooth random variable of the form (\ref{h1}) is defined by
\begin{equation*}
D_{t}F=\sum_{i=1}^{n}\frac{\partial f}{\partial x_{i}}\left(
B_{t_{1}},\ldots ,B_{t_{n}}\right) \mathbf{1}_{[0,t_{i}]}(t),
\end{equation*}%
which is an element of $L^{2}\left( \Omega \times \lbrack 0,\infty )\right) $%
. We denote by \ $\mathbb{D}^{1,2}$ the completion of $\mathcal{S}$ with
respect to the norm $\left\| F\right\| _{1,2}\ $\ given by
\begin{equation*}
\left\| F\right\| _{1,2}^{2}=E\left[ F^{2}\right] +E
\left( \int_{0}^{\infty }\left( D_{t}F\right) ^{2}dt\right) .
\end{equation*}%
The classical It\^{o} representation theorem asserts that any square
integrable random variable can be expressed as%
\begin{equation*}
F=E [F ] + \int_{0}^{\infty }u_{t}dB_{t},
\end{equation*}
where $u=\{u_{t},t\geq 0\}$ is a unique adapted process such that $E%
\left( \int_{0}^{\infty }u_{t}^{2}dt\right) <\infty $. If $F$ belongs to $%
\mathbb{D}^{1,2}$, then $u_{t}= E[D _{t}F|\mathcal{F}_{t}]$, \ where $%
\{\mathcal{F}_{t},t\geq 0\}$ is the filtration generated by $B$, and we
obtain the Clark-Ocone formula (see \cite{Oc})%
\begin{equation}
F= E[ F\mathbb{]+}\int_{0}^{\infty } E[D _{t}F|\mathcal{F}%
_{t}]dB_{t}.  \label{x2}
\end{equation}

\section{Stochastic integral representations}

Consider the   random variable $\gamma_t $
  defined rigorously as the limit in
$L^{2}(\Omega )$
\begin{equation} \label{k1}
\gamma_t=\lim_{\varepsilon \rightarrow 0} \int_{0}^{t}\int_{0}^{u}p_{\varepsilon
}^{\prime }(B_{u}-B_{s})dsdu,
\end{equation}
where $p_{\varepsilon }(x)=(2\pi \varepsilon )^{-\frac{1}{2}}\exp (-x^{2}/2\varepsilon)$.
The process  $\gamma_t$ coincides with the derivative $-\frac {d}{dy} \alpha_t(y) |_{y=0}$ of the self-intersection local time
\[
\alpha_t(y) = \int_0^t \int_0^u \delta_y(B_u-B_s) dsdu.
\]
The derivative of the self-intersection local time has been studied by Rogers and Walsh in  \cite{RW} and by Rosen
in \cite{R1}.

  We  are going to
use Clark-Ocone formula to show that the limit (\ref{k1}) exists and to provide an
integral represenation for this random variable.

\begin{lemma}   \label{lem1}
Set   $\gamma^\epsilon_t =\int_{0}^{t}\int_{0}^{u}p_{\varepsilon
}^{\prime }(B_{u}-B_{s})dsdu$. Then, $\gamma^\epsilon_t$
converges in $L^{2}(\Omega )$ as $\varepsilon $ tends to zero to the random
variable%
\[
\gamma_{t}=2\int_{0}^{t}\left(
\int_{0}^{r}p_{t-r}(B_{r}-B_{s})ds-L_{r}^{B_{r}}\right) dB_{r}.
\]
\end{lemma}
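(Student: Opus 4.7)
\noindent The natural strategy is to apply the Clark--Ocone representation~(\ref{x2}) to the smooth approximation $\gamma_t^\varepsilon$ and pass to the limit in the integrand. First I would note that $E[\gamma_t^\varepsilon]=0$: indeed, $p'_\varepsilon$ is an odd function and, for $s<u$, $B_u-B_s$ has a symmetric distribution, so each integrand has zero mean.

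Next, I would compute the Malliavin derivative. Since $p'_\varepsilon$ is smooth and bounded together with its derivatives,
\[
D_r\gamma_t^\varepsilon=\int_0^t\!\!\int_0^u p''_\varepsilon(B_u-B_s)\bigl(\mathbf 1_{[0,u]}(r)-\mathbf 1_{[0,s]}(r)\bigr)\,ds\,du=\int_r^t\!\!\int_0^r p''_\varepsilon(B_u-B_s)\,ds\,du
\]
for $r\le t$. Conditioning on $\mathcal F_r$ and using that $B_u-B_r$ is an independent $N(0,u-r)$ variable, the convolution $p''_\varepsilon\ast p_{u-r}=p''_{\varepsilon+u-r}$ together with the heat equation $p''_t=2\partial_t p_t$ gives
\[
E[D_r\gamma_t^\varepsilon\mid\mathcal F_r]=2\int_0^r\bigl[p_{\varepsilon+t-r}(B_r-B_s)-p_\varepsilon(B_r-B_s)\bigr]ds.
\]
So by Clark--Ocone, $\gamma_t^\varepsilon=\int_0^t u_r^\varepsilon\,dB_r$ where $u_r^\varepsilon$ is twice the above bracket.

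Now the candidate limit is obtained by identifying the two pieces as $\varepsilon\downarrow0$. The first piece, $\int_0^r p_{\varepsilon+t-r}(B_r-B_s)\,ds$, converges to $\int_0^r p_{t-r}(B_r-B_s)\,ds$ by dominated convergence (the kernel is uniformly bounded for $r$ away from $t$ and the singularity at $r=t$ is integrable in $r$ after taking expectations). For the second piece, the occupation times formula gives
\[
\int_0^r p_\varepsilon(B_r-B_s)\,ds=\int_{\mathbb R}p_\varepsilon(B_r-x)L_r^x\,dx,
\]
which converges to $L_r^{B_r}$ as $\varepsilon\to 0$ by continuity of $x\mapsto L_r^x$. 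So the candidate integrand is $u_r=2\bigl(\int_0^r p_{t-r}(B_r-B_s)\,ds-L_r^{B_r}\bigr)$, which yields the stated formula.

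Finally I would upgrade these pointwise convergences to convergence in $L^2(\Omega\times[0,t])$, which by It\^o's isometry translates into the desired $L^2(\Omega)$ convergence of $\gamma_t^\varepsilon$ to $\gamma_t$. This is the main technical obstacle: one needs uniform $L^2$ bounds controlling $\int_0^r(p_\varepsilon(B_r-B_s)-p_{\varepsilon'}(B_r-B_s))\,ds$ and the analogous difference for the heat kernel at time $\varepsilon+t-r$, integrated in $r$. The standard route is to write such integrals in the Fourier domain (using $p_\varepsilon(x)=(2\pi)^{-1}\int e^{i\xi x-\varepsilon\xi^2/2}d\xi$) and estimate $E[e^{i\xi(B_r-B_s)+i\eta(B_r-B_{s'})}]$ explicitly, checking that the resulting double $\xi,\eta$-integrals are uniformly dominated and that H\"older continuity of the local time handles the $L_r^{B_r}$ piece. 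The $L^2$-Cauchy property then follows, identifying the limit $\gamma_t$ as stated.
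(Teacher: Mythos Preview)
Your proposal is correct and follows essentially the same route as the paper: apply Clark--Ocone, compute $D_r\gamma_t^\varepsilon$, take the conditional expectation via the semigroup property and the heat equation, and then pass to the limit in $L^2(\Omega\times[0,t])$. In fact you supply more detail than the paper does on the last step---the paper simply asserts the $L^2$ convergence of the integrand to $2\bigl(\int_0^r p_{t-r}(B_r-B_s)\,ds-L_r^{B_r}\bigr)$ without further justification, whereas you sketch the occupation-times and Fourier arguments that would make this rigorous.
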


\begin{proof}
By Clark-Ocone formula applied to  $\gamma^\epsilon_t$  we
obtain the integral representation
\[
\gamma^\epsilon_t=\int_{0}^{1}E(D_{r}\gamma^\epsilon_t|\mathcal{F}_{r})dB_{r},
\]%
where $\{\mathcal{F}_{t},t\geq 0\}$ denotes the filtration generated by the
Brownian motion. Then,
\[
D_{r}\gamma^\epsilon_t=\int_{0}^{t}\int_{0}^{u}p_{\epsilon
}^{\prime \prime }(B_{u}-B_{s})\mathbf{1}_{[s,u]}(r)dsdu,
\]%
and for any $r\le t$
\begin{eqnarray*}
&&E(D_{r}\gamma^\epsilon_t|\mathcal{F}_{r})=\int_{r}^{t}%
\int_{0}^{r}p_{\epsilon +u-r}^{\prime \prime }(B_{r}-B_{s})dsdu \\
&&\quad =2\int_{r}^{t}\int_{0}^{r}\frac{\partial p_{\epsilon +u-r}}{\partial
u}(B_{r}-B_{s})dsdu \\
&&\quad =2\int_{0}^{r}(p_{\epsilon +t-r}(B_{r}-B_{s})-p_{\epsilon
}(B_{r}-B_{s}))ds.
\end{eqnarray*}%
As $\varepsilon $ tends to zero this expression converges in $L^{2}(\Omega
\times \lbrack 0,t])$ to%
\[
2\left( \int_{0}^{r}p_{t-r}(B_{r}-B_{s})ds-L_{r}^{B_{r}}\right) ,
\]%
which completes the proof.
\end{proof}

 Let us now obtain a stochastic integral representation
for the third integrated moment $F_{t}^{h}=\int_{\mathbb{R}%
}(L_{t}^{x+h}-L_{t}^{x})^{3}dx$. Notice first that $E(F_{t}^{h})=0$ because $%
F_{t}^{h}$ is an odd functional of the Brownian motion.

\begin{proposition}
We have $F_{t}^{h}=\int_{0}^{t}\Phi _{r}dB_{r}$, where $\Phi
_{r}=\sum_{i=1}^{4}\Phi _{r}^{(i)}$, and
\begin{eqnarray*}
\Phi _{r}^{(1)} &=&6\ \int_{\mathbb{R}}\left( L_{r}^{z+h}-L_{r}^{z}\right)
^{2}\mathbf{1}_{[0,h]}(B_{r}-z)dz \\
\Phi _{r}^{(2)} &=&-6\int_{\mathbb{R}}\int_{0}^{h}\ \left(
L_{r}^{z+h}-L_{r}^{z}\right) ^{2}p_{t-r}(B_{r}-z-y)dydz \\
\Phi _{r}^{(3)} &=&\frac{12h}{\sqrt{2\pi }}\int_{0}^{r}\int_{-h}^{h}%
\int_{\frac{h^2} { t-r}}^{\infty }p_{t-r-\frac{h^{2}}{z}}(B_{r}-B_{s}+y)z^{-\frac{3}{2}%
}(1-e^{-\frac{z}{2}})dzdyds \\
\Phi _{r}^{(4)}
&=&-\frac{12h}{\sqrt{2\pi}}\int_{0}^{r}\mathbf{1}_{[-h,h]}(B_{r}-B_{s})ds \int_{\frac{h^2} { t-r}}^{\infty } z^{-\frac{3}{2}%
}(1-e^{-\frac{z}{2}})dz  .
\end{eqnarray*}
\end{proposition}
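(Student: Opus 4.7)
The plan is to apply the Clark--Ocone formula (\ref{x2}) to $F_t^h$. Since $F_t^h$ is odd under $B \mapsto -B$ it has mean zero, so $F_t^h = \int_0^t E[D_r F_t^h \mid \mathcal{F}_r]\,dB_r$ and the task is to identify $\Phi_r := E[D_r F_t^h \mid \mathcal{F}_r]$ with $\sum_{i=1}^4 \Phi_r^{(i)}$. To make this rigorous I would work with the smoothed approximation $L_t^{x,\varepsilon} = \int_0^t p_\varepsilon(B_s - x)\,ds$ (as was done for $\gamma_t$ in Lemma~\ref{lem1}), carry out the computations for $F_t^{h,\varepsilon} = \int_{\mathbb R}(L_t^{x+h,\varepsilon} - L_t^{x,\varepsilon})^3\,dx$, and pass to the limit $\varepsilon \to 0$ in $L^2(\Omega)$ at the end.

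Since $D_r L_t^{x,\varepsilon} = \int_r^t p_\varepsilon'(B_s-x)\,ds$, one obtains $D_r F_t^{h,\varepsilon} = 3\int(L_t^{x+h,\varepsilon}-L_t^{x,\varepsilon})^2 N_x^\varepsilon\,dx$ with $N_x^\varepsilon = \int_r^t[p_\varepsilon'(B_s-x-h)-p_\varepsilon'(B_s-x)]\,ds$. Using the Markov property at time $r$, I decompose $L_t^{x+h,\varepsilon}-L_t^{x,\varepsilon} = A(x)+M(x)$, where $A(x)=L_r^{x+h,\varepsilon}-L_r^{x,\varepsilon}$ is $\mathcal{F}_r$-measurable and $M(x) = \int_r^t[p_\varepsilon(B_s-x-h)-p_\varepsilon(B_s-x)]\,ds$ is the analogous increment over $[r,t]$. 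Expanding $(A+M)^2 = A^2+2AM+M^2$ splits $\Phi_r^\varepsilon$ into three pieces. For the $A^2$ piece, $E[N_x^\varepsilon \mid \mathcal{F}_r]$ is a difference of spatial derivatives of heat kernels in $x$; writing this difference as $-\int_0^h p_{\varepsilon+s-r}''(B_r-x-y)\,dy$ and using $\int_0^{t-r} p_u''(z)\,du = 2(p_{t-r}(z)-\delta(z))$ in the limit produces $2\int_0^h[\delta(B_r-x-y)-p_{t-r}(B_r-x-y)]\,dy$, which reconstructs exactly $\Phi_r^{(1)}+\Phi_r^{(2)}$. For the $M^2$ piece, I exploit the identity $N_x = -\partial_x M(x)$ that follows from $p_\varepsilon'(B_s-x) = -\partial_x p_\varepsilon(B_s-x)$; after the change of variable $y = x - B_r$ (which removes $B_r$ from the integrand), $M^2 N_x$ becomes $-\tfrac13 \partial_y \widetilde{M}(y)^3$, a total derivative in $y$, so its integral over $\mathbb R$ vanishes.

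The main obstacle is the cross term $6\int A(x)\,E[M(x) N_x \mid \mathcal{F}_r]\,dx$. Writing $A(x) = \int_0^r[p_\varepsilon(B_s-x-h)-p_\varepsilon(B_s-x)]\,ds$ collapses the $x$-integral (via delta functions in the $\varepsilon \to 0$ limit) into an integral over $[0,r]$ in the increment $B_r - B_s$. Using $N_x = -\partial_x M(x)$ once more gives $E[M(x) N_x \mid \mathcal{F}_r] = -\tfrac12\partial_x Q(x-B_r)$, where $Q(y) = \widetilde{E}[\widetilde{M}(y)^2]$; evaluating $Q$ via the Markov property of $\hat B_u = B_{r+u}-B_r$ and sending $\varepsilon \to 0$ produces the factorized kernel $[p_{u-v}(0)-p_{u-v}(h)]\bigl[p_v(y)+p_v(y+h)\bigr]$ integrated over $\{0<v<u<t-r\}$. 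A further integration by parts in the inner time variable using $p_v'' = 2\partial_v p_v$ then separates a boundary contribution that supplies the delta function responsible for $\Phi_r^{(4)}$ from a bulk contribution giving $\Phi_r^{(3)}$, and the change of variable $z = h^2/w$ converts $\frac{1}{\sqrt{2\pi w}}(1-e^{-h^2/(2w)})$ into $\frac{h}{\sqrt{2\pi}} z^{-3/2}(1-e^{-z/2})$ to match the stated form. Keeping track of these nested integrations by parts and changes of variable, and justifying the $\varepsilon \to 0$ limits in $L^2(\Omega)$ piece by piece, is where the bulk of the work lies; the algebra itself is elementary but intricate.
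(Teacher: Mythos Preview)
Your approach is correct and is essentially the same as the paper's, with only an organizational difference in how the decomposition is set up. The paper first integrates out the spatial variable $x$ (so that the integrand depends only on the differences $B_{s_2}-B_{s_1}$ and $B_{s_3}-B_{s_1}$) and then splits according to the position of $r$ among the ordered times $s_1<s_2<s_3$: the region $\{s_1<s_2<r<s_3\}$ produces $\Phi_r^{(1)}+\Phi_r^{(2)}$ and the region $\{s_1<r<s_2<s_3\}$ produces $\Phi_r^{(3)}+\Phi_r^{(4)}$, while on $\{r<s_1\}$ the Malliavin derivative vanishes identically. Your $(A+M)^2$ expansion is the same decomposition in disguise: your $A^2$ term corresponds to the first region, your $2AM$ term to the second, and your $M^2$ term to $\{r<s_1\}$; the total-derivative argument you give for the vanishing of the $M^2$ piece is exactly the translation invariance in $x$ that the paper exploits by integrating $x$ out first. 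The subsequent manipulations---the heat-equation identity $p''=2\partial_t p$, the change of variable $z=h^2/w$ in the integral $\int_0^{t-s_2}\frac{1}{\sqrt{s}}(1-e^{-h^2/(2s)})\,ds$, and the separation of the boundary (delta) term from the bulk term upon integrating in the inner time variable---are identical in both routes.
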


\begin{proof}
Let us write%
\begin{eqnarray*}
F_{t}^{h} &=&\lim_{\varepsilon \rightarrow 0}\int_{\mathbb{R}}\left(
\int_{0}^{t}\left[ p_{\varepsilon }(B_{s}-x-h)-p_{\varepsilon }(B_{s}-x)%
\right] ds\right) ^{3}dx \\
&=&\lim_{\varepsilon \rightarrow 0}\int_{\mathbb{R}}\int_{[0,t]^{3}}%
\prod_{i=1}^{3}\left( \left[ p_{\varepsilon }(B_{s_{i}}-x-h)-p_{\varepsilon
}(B_{s_{i}}-x)\right] \right) dsdx \\
&=&6\lim_{\varepsilon \rightarrow 0}\int_{D}\int_{\mathbb{R}}\prod_{i=1}^{3}%
\left[ p_{\varepsilon }(B_{s_{i}}-x-h)-p_{\varepsilon }(B_{s_{i}}-x)\right]
dxds,
\end{eqnarray*}%
where $D=\{(s_{1},s_{2},s_{3})\in \lbrack 0,t]^{3}:s_{1}<s_{2}<s_{3}\}$. We
can pass to the limit  as $\epsilon$ tends to zero the first factor $p_{\varepsilon
}(B_{s_{1}}-x-h)-p_{\varepsilon }(B_{s_{1}}-x)$, and we obtain%
\begin{eqnarray*}
F_{t}^{h} &=&\lim_{\varepsilon \rightarrow 0}6\int_{D}\Big\{\left[
p_{\varepsilon }(B_{s_{2}}-B_{s_{1}})-p_{\varepsilon }(B_{s_{2}}-B_{s_{1}}+h)%
\right]  \\
&&\times \left[ p_{\varepsilon }(B_{s_{3}}-B_{s_{1}})-p_{\varepsilon
}(B_{s_{3}}-B_{s_{1}}+h)\right]  \\
&&-\left[ p_{\varepsilon }(B_{s_{2}}-B_{s_{1}}-h)-p_{\varepsilon
}(B_{s_{2}}-B_{s_{1}})\right]  \\
&&\times \left[ p_{\varepsilon }(B_{s_{3}}-B_{s_{1}}-h)-p_{\varepsilon
}(B_{s_{3}}-B_{s_{1}})\right] \Big\}ds \\
&=&\lim_{\varepsilon \rightarrow 0}6\int_{D}\Phi _{\varepsilon }(s)ds.
\end{eqnarray*}

We are going to apply the Clark-Ocone formula to the random variable $%
\int_{D}\Phi _{\varepsilon }(s)ds$. Fix $r\in \lbrack 0,t]$. We need to
compute $\int_{D}E$ $\left( D_{r}\left[ \Phi _{\varepsilon }(s)\right] |%
\mathcal{F}_{r}\right) ds$.  To do this we decompose the \ region
$D$, up to a set  of zero Lebesgue measure,   as   $D=D_{0}\cup
D_{1}\cup D_{2}$, where
\begin{eqnarray*}
D_{1} &=&\{s:0\leq s_{1}<s_{2}<r<s_{3}\leq t\}, \\
D_{2} &=&\{s:0\leq s_{1}<r<s_{2}<s_{3}\leq t\},
\end{eqnarray*}%
and $D_{0}= \{s: 0\le r\le s_1\} \cup \{s: s_3 \le r\le t\}$. Notice that on $D_{0}$%
, $D_{r}\left[ \Phi _{\varepsilon }(s)\right] =0$.

\noindent \textbf{Step 1. }For the region $D_{1}$ we obtain%
\begin{eqnarray*}
E\left( D_{r}\left[ \Phi _{\varepsilon }(s)\right] |\mathcal{F}_{r}\right)
&=&E(\{\left[ p_{\varepsilon }(B_{s_{2}}-B_{s_{1}})-p_{\varepsilon
}(B_{s_{2}}-B_{s_{1}}+h)\right]  \\
&&\times \left[ p_{\varepsilon }^{\prime
}(B_{s_{3}}-B_{s_{1}})-p_{\varepsilon }^{\prime }(B_{s_{3}}-B_{s_{1}}+h)%
\right]  \\
&&-\ \ \left[ p_{\varepsilon }(B_{s_{2}}-B_{s_{1}}-h)-p_{\varepsilon
}(B_{s_{2}}-B_{s_{1}})\right]  \\
&&\times \left[ p_{\varepsilon }^{\prime
}(B_{s_{3}}-B_{s_{1}}-h)-p_{\varepsilon }^{\prime }(B_{s_{3}}-B_{s_{1}})%
\right] \}|\mathcal{F}_{r}) \\
&=&\left[ p_{\varepsilon }(B_{s_{2}}-B_{s_{1}})-p_{\varepsilon
}(B_{s_{2}}-B_{s_{1}}+h)\right]  \\
&&\times \left[ p_{\varepsilon +s_{3}-r}^{\prime
}(B_{r}-B_{s_{1}})-p_{\varepsilon +s_{3}-r}^{\prime }(B_{r}-B_{s_{1}}+h)%
\right]  \\
&&-\ \ \left[ p_{\varepsilon }(B_{s_{2}}-B_{s_{1}}-h)-p_{\varepsilon
}(B_{s_{2}}-B_{s_{1}})\right]  \\
&&\times \left[ p_{\varepsilon +s_{3}-r}^{\prime
}(B_{r}-B_{s_{1}}-h)-p_{\varepsilon +s_{3}-r}^{\prime }(B_{r}-B_{s_{1}})%
\right] .
\end{eqnarray*}%
We can write this in the following form%
\begin{eqnarray*}
&&E\left( D_{r}\left[ \Phi _{\varepsilon }(s)\right] |\mathcal{F}_{r}\right)
\\
&=&\int_{0}^{h}\left[ p_{\varepsilon }(B_{s_{2}}-B_{s_{1}}+h)-p_{\varepsilon
}(B_{s_{2}}-B_{s_{1}})\right] p_{\varepsilon +s_{3}-r}^{\prime \prime
}(B_{r}-B_{s_{1}}+y)dy \\
&&-\int_{0}^{h}\left[ p_{\varepsilon }(B_{s_{2}}-B_{s_{1}})-p_{\varepsilon
}(B_{s_{2}}-B_{s_{1}}-h)\right] p_{\varepsilon +s_{3}-r}^{\prime \prime
}(B_{r}-B_{s_{1}}-y)dy \\
&=&2\int_{0}^{h}\left[ p_{\varepsilon
}(B_{s_{2}}-B_{s_{1}}+h)-p_{\varepsilon }(B_{s_{2}}-B_{s_{1}})\right] \frac{%
\partial p_{\varepsilon +s_{3}-r}}{\partial s_{3}}(B_{r}-B_{s_{1}}+y)dy \\
&&-2\int_{0}^{h}\left[ p_{\varepsilon }(B_{s_{2}}-B_{s_{1}})-p_{\varepsilon
}(B_{s_{2}}-B_{s_{1}}-h)\right] \frac{\partial p_{\varepsilon +s_{3}-r}}{%
\partial s_{3}}(B_{r}-B_{s_{1}}-y)dy.
\end{eqnarray*}%
Integrating with respect to the variable $s_3  $ yields%
\begin{eqnarray*}
&&\int_{D}E\left( D_{r}\left[ \Phi _{\varepsilon }(s)\right] |\mathcal{F}%
_{r}\right) ds   \\
&=& 2\int_{0\leq s_{1}<s_{2}\leq r}\int_{0}^{h}\left[ p_{\varepsilon
}(B_{s_{2}}-B_{s_{1}}+h)-p_{\varepsilon }(B_{s_{2}}-B_{s_{1}})\right]  \\
&&\times \left[ p_{\varepsilon +t-r}(B_{r}-B_{s_{1}}+y)-p_{\varepsilon
}(B_{r}-B_{s_{1}}+y)\right] dyds_{1}ds_{2} \\
&&-2\int_{0\leq s_{1}<s_{2}\leq r}\int_{0}^{h}\left[ p_{\varepsilon
}(B_{s_{2}}-B_{s_{1}})-p_{\varepsilon }(B_{s_{2}}-B_{s_{1}}-h)\right]  \\
&&\times \left[ p_{\varepsilon +t-r}(B_{r}-B_{s_{1}}-y)-p_{\varepsilon
}(B_{r}-B_{s_{1}}-y)\right] dyds_{1}ds_{2}.
\end{eqnarray*}%
This expression can be written in terms of the local time:%
\begin{eqnarray*}
&&\int_{D}E\left( D_{r}\left[ \Phi _{\varepsilon }(s)\right] |\mathcal{F}%
_{r}\right) ds  \\
&=&2\int_{\mathbb{R}^{2}}\int_{0}^{h}\int_{0}^{r}\left[ p_{\varepsilon
}(x-z+h)-p_{\varepsilon }(x-z)\right] (L_{r}^{x}-L_{s}^{x})L_{ds}^{z} \\
&&\times \left[ p_{\varepsilon +t-r}(B_{r}-z+y)-p_{\varepsilon }(B_{r}-z+y)%
\right] dydxdz \\
&&-2\int_{\mathbb{R}^{2}}\int_{0}^{h}\int_{0}^{r}\left[ p_{\varepsilon
}(x-z)-p_{\varepsilon }(x-z-h)\right] (L_{r}^{x}-L_{s}^{x})L_{ds}^{z} \\
&&\times \left[ p_{\varepsilon +t-r}(B_{r}-z-y)-p_{\varepsilon }(B_{r}-z-y)%
\right] dydxdz.
\end{eqnarray*}%
We make the change of variables $y\rightarrow h-y$ \ and $z\rightarrow z+h$
in the last integral and we obtain%
\begin{eqnarray*}
&&\int_{D}E\left( D_{r}\left[ \Phi _{\varepsilon }(s)\right] |\mathcal{F}%
_{r}\right) ds \\
&=&2\int_{\mathbb{R}^{2}}\int_{0}^{h}\int_{0}^{r}\left[ p_{\varepsilon
}(x-z+h)-p_{\varepsilon }(x-z)\right] (L_{r}^{x}-L_{s}^{x})\left(
L_{ds}^{z}-L_{ds}^{z-h}\right)  \\
&&\times \left[ p_{\varepsilon +t-r}(B_{r}-z+y)-p_{\varepsilon }(B_{r}-z+y)%
\right] dydxdz.
\end{eqnarray*}%
Taking the limit as $\varepsilon $ tends to zero in $L^{2}(\Omega \times
\lbrack 0,t])$ and integrating in $x$  yields%
\begin{eqnarray*}
&&\lim_{\epsilon \rightarrow 0}\int_{D}E\left( D_{r}\left[ \Phi
_{\varepsilon }(s)\right] |\mathcal{F}_{r}\right) ds \\
&=&2\int_{\mathbb{R}}\int_{0}^{h}\int_{0}^{r}\left(
L_{r}^{z-h}-L_{s}^{z-h}-L_{r}^{z}+L_{s}^{z}\right) \left(
L_{ds}^{z}-L_{ds}^{z-h}\right)  \\
&&\times \left[ p_{t-r}(B_{r}-z+y)-\delta _{0}(B_{r}-z+y)\right] dydz \\
&=&\int_{\mathbb{R}}\ \ \left( L_{r}^{z}-L_{r}^{z-h}\right) ^{2}\ \ \mathbf{1%
}_{[0,h]}(z-B_{r})dz-\int_{\mathbb{R}}\int_{0}^{h}\ \left(
L_{r}^{z}-L_{r}^{z-h}\right) ^{2}p_{t-r}(B_{r}-z+y)dydz \\
&=&\int_{\mathbb{R}}\ \ \left( L_{r}^{z+h}-L_{r}^{z}\right) ^{2}\ \ \mathbf{1%
}_{[0,h]}(B_{r}-z)dz-\int_{\mathbb{R}}\int_{0}^{h}\ \left(
L_{r}^{z+h}-L_{r}^{z}\right) ^{2}p_{t-r}(B_{r}-z-y)dydz \\
&:=&\Psi _{r}^{(1)}.
\end{eqnarray*}

\noindent \textbf{Step 2. }For the region $D_{2}$ we obtain%
\begin{eqnarray*}
&&E\left( D_{r}\left[ \Phi _{\varepsilon }(s)\right] |\mathcal{F}_{r}\right)
=E(\{\left[ p_{\varepsilon }^{\prime }(B_{s_{2}}-B_{s_{1}})-p_{\varepsilon
}^{\prime }(B_{s_{2}}-B_{s_{1}}+h)\right]  \\
&&\times \left[ p_{\varepsilon }(B_{s_{3}}-B_{s_{1}})-p_{\varepsilon
}(B_{s_{3}}-B_{s_{1}}+h)\right]  \\
&&+\left[ p_{\varepsilon }(B_{s_{2}}-B_{s_{1}})-p_{\varepsilon
}(B_{s_{2}}-B_{s_{1}}+h)\right] \left[ p_{\varepsilon }^{\prime
}(B_{s_{3}}-B_{s_{1}})-p_{\varepsilon }^{\prime }(B_{s_{3}}-B_{s_{1}}+h)%
\right]  \\
&&-\ \ \left[ p_{\varepsilon }^{\prime
}(B_{s_{2}}-B_{s_{1}}-h)-p_{\varepsilon }^{\prime }(B_{s_{2}}-B_{s_{1}})%
\right] \left[ p_{\varepsilon }(B_{s_{3}}-B_{s_{1}}-h)-p_{\varepsilon
}(B_{s_{3}}-B_{s_{1}})\right]  \\
&&-\ \left[ p_{\varepsilon }(B_{s_{2}}-B_{s_{1}}-h)-p_{\varepsilon
}(B_{s_{2}}-B_{s_{1}})\right] \left[ p_{\varepsilon }^{\prime
}(B_{s_{3}}-B_{s_{1}}-h)-p_{\varepsilon }^{\prime }(B_{s_{3}}-B_{s_{1}})%
\right] \}|\mathcal{F}_{r}).
\end{eqnarray*}
We take first the conditional expectation with respect to
$\mathcal{F}_{s_2}$ which contains $\mathcal{F}_r$ and we  obtain
\begin{eqnarray*}
&&E\left( D_{r}\left[ \Phi _{\varepsilon }(s)\right] |\mathcal{F}_{r}\right)
 = E(\{\left[ p_{\varepsilon }^{\prime }(B_{s_{2}}-B_{s_{1}})-p_{\varepsilon
}^{\prime }(B_{s_{2}}-B_{s_{1}}+h)\right]    \\
&& \quad  \times \left[ p_{\varepsilon
+s_{3}-s_{2}}(B_{s_{2}}-B_{s_{1}})-p_{\varepsilon
+s_{3}-s_{2}}(B_{s_{2}}-B_{s_{1}}+h)\right]  \\
&&+\left[ p_{\varepsilon }(B_{s_{2}}-B_{s_{1}})-p_{\varepsilon
}(B_{s_{2}}-B_{s_{1}}+h)\right]   \\
&&\quad\times   \left[ p_{\varepsilon +s_{3}-s_{2}}^{\prime
}(B_{s_{2}}-B_{s_{1}})-p_{\varepsilon +s_{3}-s_{2}}^{\prime
}(B_{s_{2}}-B_{s_{1}}+h)\right]  \\
&&-  \left[ p_{\varepsilon }^{\prime }(B_{s_{2}}-B_{s_{1}}-h)-p_{\varepsilon
}^{\prime }(B_{s_{2}}-B_{s_{1}})\right]   \\
&& \quad \times  \left[ p_{\varepsilon
+s_{3}-s_{2}}(B_{s_{2}}-B_{s_{1}}-h)-p_{\varepsilon
+s_{3}-s_{2}}(B_{s_{2}}-B_{s_{1}})\right]  \\
&&-\left[ p_{\varepsilon }(B_{s_{2}}-B_{s_{1}}-h)-p_{\varepsilon
}(B_{s_{2}}-B_{s_{1}})\right]  \\
&& \quad \times \left[ p_{\varepsilon +s_{3}-s_{2}}^{\prime
}(B_{s_{2}}-B_{s_{1}}-h)-p_{\varepsilon +s_{3}-s_{2}}^{\prime
}(B_{s_{2}}-B_{s_{1}})\right] \}|\mathcal{F}_{r}).
\end{eqnarray*}%
As a consequence,
\begin{eqnarray*}
&&E\left( D_{r}\left[ \Phi _{\varepsilon }(s)\right] |\mathcal{F}_{r}\right)
\\
&=&\ \int_{\mathbb{R}}dyp_{s_{2}-r}(y)\{\left[ p_{\varepsilon }^{\prime
}(B_{r}-B_{s_{1}}+y)-p_{\varepsilon }^{\prime }(B_{r}-B_{s_{1}}+y+h)\right]
\\
&&\times \left[ p_{\varepsilon
+s_{3}-s_{2}}(B_{r}-B_{s_{1}}+y)-p_{\varepsilon
+s_{3}-s_{2}}(B_{r}-B_{s_{1}}+h+y)\right]  \\
&&+\left[ p_{\varepsilon }(B_{r}-B_{s_{1}}+y)-p_{\varepsilon
}(B_{r}-B_{s_{1}}+y+h)\right]  \\
&&\times \left[ p_{\varepsilon +s_{3}-s_{2}}^{\prime
}(B_{r}-B_{s_{1}}+y)-p_{\varepsilon +s_{3}-s_{2}}^{\prime
}(B_{r}-B_{s_{1}}+h+y)\right]  \\
&&-\left[ p_{\varepsilon }^{\prime }(B_{r}-B_{s_{1}}+y-h)-p_{\varepsilon
}^{\prime }(B_{r}-B_{s_{1}}+y)\right]  \\
&&\times \left[ p_{\varepsilon
+s_{3}-s_{2}}(B_{r}-B_{s_{1}}+y-h)-p_{\varepsilon
+s_{3}-s_{2}}(B_{r}-B_{s_{1}}+y)\right]  \\
&&-\left[ p_{\varepsilon }(B_{r}-B_{s_{1}}+y-h)-p_{\varepsilon
}(B_{r}-B_{s_{1}}+y)\right]  \\
&&\times \left[ p_{\varepsilon +s_{3}-s_{2}}^{\prime
}(B_{r}-B_{s_{1}}+y-h)-p_{\varepsilon +s_{3}-s_{2}}^{\prime
}(B_{r}-B_{s_{1}}+y)\right] \} .
\end {eqnarray*}
Integrating by parts we obtain
\begin{eqnarray*}
&&E\left( D_{r}\left[ \Phi _{\varepsilon }(s)\right] |\mathcal{F}_{r}\right) \\
& &=-\int_{\mathbb{R}}dyp_{s_{2}-r}^{\prime }(y)\{\left[ p_{\varepsilon
}(B_{r}-B_{s_{1}}+y)-p_{\varepsilon }(B_{r}-B_{s_{1}}+y+h)\right]  \\
&&\times \left[ p_{\varepsilon
+s_{3}-s_{2}}(B_{r}-B_{s_{1}}+y)-p_{\varepsilon
+s_{3}-s_{2}}(B_{r}-B_{s_{1}}+h+y)\right]  \\
&&-\left[ p_{\varepsilon }(B_{r}-B_{s_{1}}+y-h)-p_{\varepsilon
}(B_{r}-B_{s_{1}}+y)\right]  \\
&&\times \left[ p_{\varepsilon
+s_{3}-s_{2}}(B_{r}-B_{s_{1}}+y-h)-p_{\varepsilon
+s_{3}-s_{2}}(B_{r}-B_{s_{1}}+y)\right] \}.
\end{eqnarray*}%
Letting $\varepsilon $ tend to zero we obtain%
\begin{eqnarray*}
&&\lim_{\epsilon \rightarrow 0}E\left( D_{r}\left[ \Phi _{\varepsilon }(s)%
\right] |\mathcal{F}_{r}\right)  \\
&=&\left[ p_{s_{3}-s_{2}}(0)-p_{s_{3}-s_{2}}(h)\right] \left[
p_{s_{2}-r}^{\prime }(B_{r}-B_{s_{1}}+h)-p_{s_{2}-r}^{\prime
}(B_{r}-B_{s_{1}}-h)\right]  \\
&=&\left[ p_{s_{3}-s_{2}}(0)-p_{s_{3}-s_{2}}(h)\right]
\int_{-h}^{h}p_{s_{2}-r}^{\prime \prime }(B_{r}-B_{s_{1}}+y)dy.
\end{eqnarray*}%
Hence,%
\begin{eqnarray}
&&\lim_{\epsilon \rightarrow 0}\int_{D_{2}}E\left( D_{r}\left[ \Phi
_{\varepsilon }(s)\right] |\mathcal{F}_{r}\right) ds  \nonumber \\
&&\quad =2\int_{0}^{r}ds_{1}\int_{r}^{t}ds_{2}\left( \int_{s_{2}}^{t}\left[
p_{s_{3}-s_{2}}(0)-p_{s_{3}-s_{2}}(h)\right] ds_{3}\right)   \nonumber \\
&&\times \int_{-h}^{h}\frac{\partial p_{s_{2}-r}}{\partial s_{2}}%
(B_{r}-B_{s_{1}}+y)dy:=\Psi _{r}^{(2)}.  \label{e2}
\end{eqnarray}%
We have
\begin{eqnarray}
\int_{s_{2}}^{t}\left[ p_{s_{3}-s_{2}}(0)-p_{s_{3}-s_{2}}(h)\right] ds_{3}
&=&\frac{1}{\sqrt{2\pi }}\int_{0}^{t-s_{2}}\frac{1}{\sqrt{s}}\left( 1-e^{-%
\frac{h^{2}}{2s}}\right) ds  \nonumber \\
&=&\frac{h}{\sqrt{2\pi }}\int_{\frac{h^{2}}{t-s_{2}}}^{\infty }z^{-\frac{3}{2%
}}(1-e^{-\frac{z}{2}})dz.  \label{e1}
\end{eqnarray}%
Substituting (\ref{e1}) into (\ref{e2}) yields
\[
\Psi _{r}^{(2)}=\frac{2h}{\sqrt{2\pi }}\int_{-h}^{h}\int_{ \frac{h^2} {t-r}}^{\infty
}\int_{r}^{t-\frac{h^{2}}{z}}   \int_0^r\frac{\partial p_{u-r}}{\partial u}%
(B_{r}-B_{s}+y)\times z^{-\frac{3}{2}}(1-e^{-\frac{z}{2}})dsdudzdy.
\]%
Now we integrate in the variable $u$ and we obtain
\begin{eqnarray*}
\Psi _{r}^{(2)}&=&\frac{2h}{\sqrt{2\pi }}\int_{-h}^{h}\int_{ \frac{h^2} {t-r}}^{\infty }  \int_0^rp_{t-r-%
\frac{h^{2}}{z}}(B_{r}-B_{s}+y)z^{-\frac{3}{2}}(1-e^{-\frac{z}{2}})dsdzdy\\
&&-\frac{2h}{\sqrt{2\pi}}\int_0^r
\mathbf{1}_{[-h,h]}(B_{r}-B_{s})ds  \int_{\frac{h^2} { t-r}}^{\infty } z^{-\frac{3}{2}%
}(1-e^{-\frac{z}{2}})dz.
\end{eqnarray*}%
Thus,
\[
M_{t}=6\int_{0}^{t}\Psi _{r}^{(1)}dB_{r}+6\int_{0}^{t}\Psi _{r}^{(2)}dB_{r},
\]%
which completes the proof.
\end{proof}

\section{Proof of Theorem \ref{thm1}}

 The proof will be done in several steps. Along the proof we will denote by $%
C $ a generic constant, which may be different from line to line.

\textit{Step 1 } Notice first that by Lemma 2 and  the equation
\begin{equation}  \label{h2}
\frac 1{\sqrt{2\pi}} \int_0^\infty z^{-\frac 32} (1-e^{-\frac
z2})dz=1,
\end{equation}
we obtain that 
$h^{-2}\int_{0}^{t}(\Phi _{r}^{(3)}+\Phi _{r}^{(4)})dB_{r}$
converges in $L^{2}(\Omega )$ to $12\gamma_{t}$.

\textit{Step 2 } In order to handle the term $h^{-2}\int_{0}^{t}(\Phi _{r}^{(1)}+\Phi _{r}^{(2)})dB_{r}$ we consider the
function
\[
\phi _{h}(\xi )=\int_{0}^{h}p_{t-r}(\xi -y)dy-\mathbf{1}_{[0,h]}(\xi ),
\]%
and we can write
\begin{eqnarray*}
\Phi ^{(1)}_r+\Phi ^{(2)}_r &=&-6\int_{\mathbb{R}} \left( L_{r}^{z+h}-L_{r}^{z}\right) ^{2}\phi_h (B_r-z) dz\\
&=& -6\int_{\mathbb{R}}\left(
L_{r}^{B_{r}-x+h}-L_{r}^{B_{r}-x}\right) ^{2}\phi _{h}(x)dx.
\end{eqnarray*}
Applying Tanaka's formula to the time reversed Brownian motion $%
\{B_{r}-B_{s},0\leq s\leq r\}$, we obtain
\begin{eqnarray*}
 \frac 12 \left( L_{r}^{B_{r}-x+h}-L_{r}^{B_{r}-x}  \right)
&=&-(-x+h)^{+}+(-x)^{+}+(B_{r}-x+h)^{+}-(B_{r}-x)^{+} \\
&&+\int_{0}^{r}\mathbf{1}_{\{B_{r}-x-B_{s}>0\}}dB_{s}-\int_{0}^{r}\mathbf{1}%
_{\{B_{r}-x+h-B_{s}>0\}}d\widehat{B}_{s} \\
&=&-(-x+h)^{+}+(-x)^{+}+(B_{r}-x+h)^{+}-(B_{r}-x)^{+} \\
&&-\int_{0}^{r}\mathbf{1}_{[-h,0]}(B_{r}-B_{s}-x)d\widehat{B}_{s},
\end{eqnarray*}%
where $d\widehat{B}_{s}$ denotes the backward It\^{o} integral. Clearly
\[
h^{-4}E\int_{0}^{t}\left( \int_{\mathbb{R}}
\left[ (-x+h)^{+}-(-x)^{+}-(B_{r}-x+h)^{+}+(B_{r}-x)^{+} \right] ^{2}\phi
_{h}(x)dx\right) ^{2}dr\rightarrow 0,
\]%
as $h$ tends to zero. So, the only term that gives a nonzero contribution is
\[
 24\int_{\mathbb{R}}\left( \int_{0}^{r}\mathbf{1}_{[-h,0]}(B_{r}-B_{s}-x)d%
\widehat{B}_{s}\right) ^{2}\phi _{h}(x)dx.
\]%
We are going to use the following   notation
\begin{eqnarray}
\delta ^h _{r,s}(x)&=& \mathbf{1}_{[-h,0]} (B_r-B_s -x),   \label{l1}\\
A^h_{\sigma ,r}(x) &=&\int_{\sigma }^{r}\delta _{r,s }^{h}(x)d\widehat{B}%
_{s}\,,   \label{l2}
\end{eqnarray}
where $0<\sigma<r$. With this notation we  want to find the limit in distribution of 
\[
Y_h:=-24h^{-2} \int_0^t  \left( \int_{\mathbb{R}} (A^h_{0,r}(x))^2 \phi_h(x) dx \right) dB_r,
\]
as $h$ tends to zero.    By It\^{o}'s formula,
\[
(A_{0,r}^{h}(x))^2=2\int_{0}^{r}A^h_{\sigma ,r}(x)\delta _{r,\sigma }^{h}(x)d%
\widehat{B}_{\sigma }+\int_{0}^{r}\delta _{r,s}^{h}(x)ds.
\]%
Therefore,%
\begin{eqnarray}
   Y_h &=&   -48h^{-2} \int_0^t \left(\int_{\mathbb{R}}\left( \int_{0}^{r}A^h_{\sigma ,r}(x)\delta
_{r,\sigma }^{h}(x)d\widehat{B}_{\sigma }\right) \phi _{h}(x)dx \right) dB_r  \nonumber \\
&&-24h^{-2} \int_0 ^t \left( \int_{\mathbb{R}}   \int_{0}^{r}\delta
_{r,s}^{h}(x)  \phi _{h}(x)dsdx \right) dB_r .  \label{h3}
\end{eqnarray}%
Notice that, by Lemma \ref{lem1}
\begin{eqnarray*}
&&-24h^{-2}\int_{0}^{t}\left( \int_{\mathbb{R}}\int_{0}^{r}\delta
_{r,s}^{h}(x)ds\phi _{h}(x)dsdx\right) dB_{r} \\
&&= - 24h^{-2} \int_0^t  \left(  \int_\mathbb{R} \int_0^r \mathbf{1}
_{[-h,0]} (B_r-B_s-x) \left( \int_0^h p_{t-r} (x-y) dy-
\mathbf{1}_{[0,h]} (x)   \right) dsdx  \right) dB_r
\end{eqnarray*}
converges in $L^{2}(\Omega )$ to $-12\gamma_{t}$, which cancels with the limit obtained in Step 1.

To handle the first summand in the right-hand side of (\ref{h3}) we
make the decomposition
\[
\int_{\mathbb{R}}\left( \int_{0}^{r}A^h_{\sigma ,r}(x)\delta _{r,\sigma
}^{h}(x)d\widehat{B}_{\sigma }\right) \phi _{h}(x)dx=\Gamma _{r,h}-\Delta
_{r,h},
\]%
where
\begin{equation}
\Gamma _{r,h}=\int_{0}^{h}\int_{\mathbb{R}}\left(
\int_{0}^{r}A^h_{\sigma ,r}(x)\delta _{r,\sigma
}^{h}(x)d\widehat{B}_{\sigma }\right) p_{t-r}(x-y)dxdy,  \label{h4}
\end{equation}%
and
\begin{equation}
\Delta _{r,h}=\int_{0}^{h}\left( \int_{0}^{r}A^h_{\sigma ,r}(x)\delta
_{r,\sigma }^{h}(x)d\widehat{B}_{\sigma }\right) dx,  \label{h5}
\end{equation}

\textit{Step 3 }
We claim that%
\begin{equation}
\lim_{h\rightarrow 0}h^{-4}E(\Gamma _{r,h}^{2})=0,  \label{h6}
\end{equation}%
which implies that $ h^{-2} \int_0^t \Gamma_{r,h} dB_r$ converges to $0$ in $L^2(\Omega)$ as $h$ tends  to zero.  Let us prove (\ref{h6}). We can write
\begin{eqnarray*}
E(\Gamma _{r,h}^{2}) &=&\int_{0}^{r}\left( \int_{0}^{h}\int_{\mathbb{R}%
}A^h_{\sigma ,r}(x)\delta _{r,\sigma }^{h}(x)p_{t-r}(x-y)dxdy\right)
^{2}d\sigma  \\
&\leq &E\Bigg(\sup_{0\leq \sigma \leq r\leq t}\sup_{x\in \mathbb{R}%
}|A^h_{\sigma ,r}(x)|^{2} \\
&&\times \int_{0}^{r}\left( \int_{0}^{h}\int_{\mathbb{R}}\delta _{r,\sigma
}^{h}(x)p_{t-r}(x-y)dxdy\right) ^{2}d\sigma \Bigg).
\end{eqnarray*}%
Clearly, for any $p\geq 1$,
\[
h^{-4}\int_{0}^{r}\left( \int_{0}^{h}\int_{\mathbb{R}}\delta _{r,\sigma
}^{h}(x)p_{t-r}(x-y)dxdy\right) ^{2}d\sigma
\]%
converges in $L^{p}(\Omega )$ to
\[
\int_{0}^{r}p_{t-r}(B_{r}-B_{s})^{2}ds,
\]%
and, on the other hand,  by Lemma  \ref{lema1} in the Appendix,   $\|\sup_{0\leq \sigma \leq r\leq t}\sup_{x\in \mathbb{R} 
}|A^h_{\sigma ,r}(x)|   \|_p$  converges to zero as $h$ tend to zero, for any $p\ge 2$. 
  This completes the proof of (\ref{h6}).

\textit{Step 4 }
Finally, we will discuss the limit of the martingale
\[
M_{t}^{h}=48h^{-2}\int_{0}^{t}\Delta
_{r,h}dB_{r},
\]
 where $\Delta_{r,h}$ is defined in (\ref{h5}).  From the asymptotic version of Knight's theorem (see Revuz and Yor \cite{RY}%
, Theorem  2.3  page  524) it suffices to show the following
convergences in probability.
\begin{equation}
\langle M^{h},B\rangle _{t}\rightarrow 0,  \label{f1}
\end{equation}%
in probability as $h$ tends to zero, uniformly in compact sets, and
\begin{equation}
\langle M^{h}\rangle _{t}\rightarrow 192\int_{\mathbb{R}} (L^x_t)^3 dx.
\label{f2}
\end{equation}%
In fact, let $B^h$ be the Brownian motion such that  $ M^h _t =
B^h_{\langle M^h \rangle_t}$.  Then, from   Theorem  2.3  page 524
in \cite{RY}, and the convergences  (\ref{f1}) and (\ref{f2}), we
deduce that $(B, B^h,  \langle M^h \rangle_t)$ converges in
distribution to $(B, \beta,  192\int_{\mathbb{R}} (L^x_t)^3 dx)$,
where $\beta$ is a Brownian motion independent of $B$. This implies
that  $M_t^h=B^h_{\langle M^h \rangle_t}$ converges in distribution
to $\beta_ {192\int_{\mathbb{R}} (L^x_t)^3 dx}$, which yields the
desired result.

Exchanging the order of integration we can write  $\Delta _{r,h}$
  as
\begin{equation} \label{d1}
\Delta _{r,h}=\int_{0}^{r}\left( \int_{0}^{h}A^h_{\sigma ,r}(x)\delta
_{r,\sigma }^{h}(x)dx\right) d\widehat{B}_{\sigma }=\int_{0}^{r}\Psi
_{r,\sigma }^{h}d\widehat{B}_{\sigma },
\end{equation}
where%
\begin{equation} \label{d2}
\Psi _{r,\sigma }^{h}=\int_{0}^{h}A^h_{\sigma ,r}(x)\delta _{r,\sigma
}^{h}(x)dx.
\end{equation}

\textit{Step 5 } Let us prove (\ref{f1}).  In order to establish the uniform convergence with respect to $t$ in a compact set,  we are going to obtain estimates for the increments of the process   $\langle M^{h},B\rangle _{t} $, and to make use of the  Garsia-Rodemich-Rumsey to  estimate the supremum in $t$.  For any $p\ge 2$ we have, by Burkholder's inequality
\begin{eqnarray*}
&& E\left| \langle M^{h},B\rangle _{t}-\langle M^{h},B\rangle _{s} \right|^{p} \\
&&\le c_p h^{-2p}E \left|\int_{0}^{s}\left(
\int_{s }^{t}\int_{0}^{h}A^h_{\sigma ,r}(x)\delta _{r,\sigma
}^{h}(x)dxdr\right) ^{2}d\sigma  \right|^{\frac p2}  \\
&& + c_p h^{-2p}E \left| \int_{s}^{t}\left(
\int_{\sigma }^{t}\int_{0}^{h}A^h_{\sigma ,r}(x)\delta _{r,\sigma
}^{h}(x)dxdr\right) ^{2}d\sigma   \right| ^{\frac p2} \\
&&= c_p h^{-2p}( B_1 + B_2).
\end{eqnarray*}
For the term $B_1$ we can write
\[
B_1 \le  E \left(  \sup_{x\in \mathbb{R}} \sup_{0\le  \sigma \le s\le r\le t} |A^h_{ \sigma,
r}(x) |^p  \int_0^s
 \left| \int_s^t \int_0^h \delta^h _{r,\sigma} (x) dxdr \right| ^p d\sigma  \right).
 \]
Applying Cauchy-Schwarz inequality and lemmas \ref{lema1} and \ref{lema2} in the Appendix we obtain
 \[
 B_1\le  C_{p,t,\epsilon} (t-s)^{\frac p2  }   h^{2p + \frac p2 -\epsilon}.
 \]
A similar estimate can be deduced for the term $B_2$. Finally, an application of the Garsia-Rodemich-Rumsey lemma allows us to conclude the proof.

\textit{Step 6 }  Let us prove (\ref{f2}).  We have, by It\^{o}'s formula  and in view of (\ref{d1}) and (\ref{d2})
\begin{eqnarray}
\langle M^{h}\rangle _{t} &=&48^2h^{-4}\int_{0}^{t}\Delta
_{r,h}^{2}dr=48^2h^{-4}\int_{0}^{t}\int_{0}^{r}(\Psi _{r,\sigma }^{h})^2d\sigma dr  \nonumber
\\
&&\quad +48^2\times 2h^{-4}\int_{0}^{t}\int_{0}^{r}\Psi ^h_{r,\sigma }\left(
\int_{\sigma }^{r}\Psi^h _{r,s}d\widehat{B}_{s}\right) d\widehat{B}_{\sigma   \nonumber
}dr \\
&:=& 48^2 h^{-4} \left( R^1_{t,h} + 2 R^2_{t,h} \right). \label{d3}
\end{eqnarray}%
We are going to see that only the first summand in the above expression with give a nonzero contribution to the limit. Consider first  the term $ R^1_{t,h}$. We can express $(\Psi_{r,\sigma }^{h})^2$ as
\[
(\Psi _{r,\sigma }^{h})^2=\int_{0}^{h}\int_{0}^{h}A^h_{\sigma ,r}(x)A^h_{\sigma
,r}(y)\delta _{r,\sigma }^{h}(x)\delta _{r,\sigma }^{h}(y)dxdy,
\]%
and, by It\^o's formula
\[
A^h_{\sigma ,r}(x)A^h_{\sigma ,r}(y)=\int_{\sigma }^{r}\delta
_{r,s}^{h}(x)\delta _{r,s}^{h}(y)ds+\int_{\sigma }^{r}A^h_{s,r}(x)\delta
_{r,s}^{h}(y)d\widehat{B}_{s}+\int_{\sigma }^{r}A^h_{s,r}(y)\delta
_{r,s}^{h}(x)d\widehat{B}_{s}.
\]%
Substituting the above equality in the expression of $(\Psi _{r,\sigma }^{h})^2
$ yields
\begin{eqnarray*}
&&R^1_{t,h}  = \int_{0}^{t}\int_{0}^{r}\int_{\sigma
}^{r}\int_{0}^{h}\int_{0}^{h}\delta _{r,s}^{h}(x)\delta _{r,s}^{h}(y)\delta
_{r,\sigma }^{h}(x)\delta _{r,\sigma }^{h}(y)dxdydsd\sigma dr \\
&&+ \int_{0}^{t}\int_{0}^{r}\int_{0}^{h}\int_{0}^{h}\left( \int_{\sigma
}^{r}A^h_{s,r}(x)\delta _{r,s}^{h}(y)d\widehat{B}_{s}\right) \delta _{r,\sigma
}^{h}(x)\delta _{r,\sigma }^{h}(y)dxdyd\sigma dr \\
&&+ \int_{0}^{t}\int_{0}^{r}\int_{0}^{h}\int_{0}^{h}\left( \int_{\sigma
}^{r}A^h_{s,r}(y)\delta _{r,s}^{h}(x)d\widehat{B}_{s}\right) \delta _{r,\sigma
}^{h}(x)\delta _{r,\sigma }^{h}(y))dxdyd\sigma dr \\
&=&\sum_{i=1}^{3}A _{t}^{i,h}.
\end{eqnarray*}%
Only the  first term in the above sum will give a nonzero contribution to the limit. Let us consider first this  term.
We have
\[
\int_{0}^{h}\delta _{r,s}^{h}(x)\delta _{r,\sigma }^{h}(x)dx\newline
=g_{h}(B_{r}-B_{s},B_{r}-B_{\sigma }),
\]%
where
\[
g_{h}(x,y)=(h-|x|-|y|)_{+}\mathbf{1}_{\{xy<0\}}+[(h-|x|)_{+}\wedge
(h-|y|)_{+}]\mathbf{1}_{\{xy\geq 0\}}.
\]%
As a consequence,
\begin{eqnarray*}
48^2 h^{-4}A_{t}^{1,h} &=&48^2h^{-4}\int_{0}^{t}\int_{0}^{r}\int_{\sigma }^{r}\
g_{h}(B_{r}-B_{s},B_{r}-B_{\sigma })^{2}\ dsd\sigma dr \\
&=&\frac 12 48^2h^{-4}\int_{0}^{t}\int_{0}^{r}\int_{0}^{r}\
g_{h}(B_{r}-B_{s},B_{r}-B_{\sigma })^{2}\ dsd\sigma dr \\
&=&\frac 12 48^2h^{-4}\int_{0}^{t}\left( \int_{\mathbb{R}^{2}}\
g_{h}(B_{r}-x,B_{r}-y)^{2}\ L_{r}^{x}L_{r}^{y}dxdy\right) dr.
\end{eqnarray*}%
As $h$ tends to zero this converges to $\frac 14 48^2%
\int_{0}^{t}(L_{r}^{B_{r}})^{2}dr=192\int_{\mathbb{R}}(L_{t}^{x})^{3}dx$.
This follows form the fact that%
\[
\int_{\mathbb{R}^{2}}g_{h}(x,y)^{2}dxdy=\frac{1}{2}h^{4}.
\]%
Let us show that the other terms $h^{-4}A_t^{2,h}$ and $ h^{-4}A ^{3,h}_t$ converge to zero  in $L^1(\Omega)$ as $h$ tends to zero. Using H\"older's inequality  with $\frac 1p + \frac 1q =1$  yields   
\begin{eqnarray*}
h^{-4}\left| A_{t}^{2,h}\right|  &\leq &h^{-4}\int_{0}^{t}\int_{0}^{r}\left(
\int_{0}^{h}\int_{0}^{h}\delta _{r,\sigma }^{h}(x)\delta _{r,\sigma
}^{h}(y)dxdy\right) ^{\frac{1}{p}} \\
&&\times \left( \int_{0}^{h}\int_{0}^{h}\left| \int_{\sigma
}^{r}A^h_{s,r}(x)\delta _{r,s}^{h}(y)d\widehat{B}_{s}\right| ^{q}dxdy\right) ^{%
\frac{1}{q}}d\sigma dr
 \\
&\leq &h^{-4}\int_{0}^{t}\int_{0}^{r}(h-|B_{r}-B_{\sigma }|)^{\frac 2p} _{+} %
 \left( \int_{0}^{h}\int_{0}^{h}\left| \int_{\sigma }^{r}A^h_{s,r}(x)\delta
_{r,s}^{h}(y)d\widehat{B}_{s}\right| ^{q}dxdy\right) ^{\frac{1}{q}}d\sigma dr   \\
&\leq &h^{-4}\sup_{r}\left( \int_{0}^{r}(h-|B_{r}-B_{\sigma }|)^{\frac 2p}_{+} d\sigma \right)  \\
&&\times \int_{0}^{t}\left( \sup_{\sigma }\int_{0}^{h}\int_{0}^{h}\left|
\int_{\sigma }^{r}A^h_{s,r}(x)\delta _{r,s}^{h}(y)d\widehat{B}_{s}\right|
^{q}dxdy\right) ^{\frac{1}{q}}dr \\
&=&h^{-4}B_{t}^{1,h}B_{t}^{2,h}.
\end{eqnarray*}%
The term $B_{t}^{1,h}$  can be estimated as follows
\[
\int_{0}^{r}(h-|B_{r}-B_{\sigma }|)^{\frac 2p}_{+} d\sigma =\int_{\mathbb{R%
}}(h-|B_{r}-x|)^{\frac 2p}_{+} L_{r}^{x}dx\leq \frac{2p}{p+2}\left(
\sup_{r\leq t,x\in \mathbb{R}}L_{r}^{x}\right) h^{\frac{2}{p}+1}.
\]%
Furthermore, for the term $B_{t}^{2,h}$ we can write
\begin{eqnarray*}
\left\| B_{t}^{2,h}\right\| _{q} &\leq &C  \sup_{r\le t}\left( \
\int_{0}^{h}\int_{0}^{h}E\left| \int_{0}^{r}A^h_{s,r}(x)\delta _{r,s}^{h}(y)d%
\widehat{B}_{s}\right| ^{q}dxdy\right) ^{\frac{1}{q}} \\
&\leq &C \sup_{r\le t} \left( \ \int_{0}^{h}\int_{0}^{h}E\left|
\int_{0}^{r}A^h_{s,r}(x)^{2}\delta _{r,s}^{h}(y)ds\right| ^{\frac{q}{2}%
}dxdy\right) ^{\frac{1}{q}} \\
&\leq &C  h^{\frac 1q}  \sup_{r\le t}\left(  E \left(  \sup_{x\in\mathbb{R}}\sup_{0\le s\leq r\le t}\left|
A^h_{s,r}(x)\right| ^{q}   \int_{0}^{r}\int_{0}^{h}\delta
_{r,s}^{h}(y)dsdy\right)   \right) ^{\frac{1}{q}} \\
&\leq &Ch^{\frac{3}{q}}\left(   E  \left( \sup_{x\in\mathbb{R}} \sup_{0\le s\leq
r\le t }\left| A^h_{s,r}(x)\right| ^{q}   \times  \sup_{ x\in \mathbb{R} }\sup _{0\le r \le t} %
 L_{r}^{x}\right)   \right) ^{\frac{1}{q}}.
\end{eqnarray*}%
Using Lemma \ref{lema1} in the Appendix with the exponent $q$  yields
\[
\left\| B_{t}^{2,h}\right\| _{q} \le Ch^{\frac 3q  +\frac 12-\frac \epsilon q}.
\]
 As a consequence,
 \[
h^{-4}E \left| A_{t}^{2,h}  \right|   \le C h^{-4} h^{\frac{2}{p}+1
  +\frac{2}{q}+\frac{1}{q}+\frac{1}{2}- \frac \epsilon q}=Ch^{\frac 1q -\frac 12 -\frac \epsilon q},
\]%
and for $0< q<2(1-\epsilon)$  this converges to zero.  In the same way we can show
that  $h^{-4}E \left| A_{t}^{3,h}  \right|  $ tends to zero as $h$
tends to zero.

It only remains to show that the   term $ h^{-4} R^2_{t,h}$  in the right-hand side of  (\ref{d3}) converges to zero.
Using Fubini's theorem we can write%
\[
R^2_{t,h}=\int_{0}^{t}\int_{0}^{r}\Psi ^h_{r,\sigma }\left( \int_{\sigma
}^{r}\Psi ^h_{r,s}d\widehat{B}_{s}\right) d\widehat{B}_{\sigma
}dr=\int_{0}^{t}\left( \int_{\sigma }^{t}\Psi ^h_{r,\sigma }\left(
\int_{\sigma }^{r}\Psi ^h_{r,s}d\widehat{B}_{s}\right) dr\right) d\widehat{B}%
_{\sigma },
\]%
hence,%
\begin{eqnarray*}
E(R^2_{t,h})^{2} &=&E\int_{0}^{t}\left( \int_{\sigma }^{t}\Psi^h _{r,\sigma
}\left( \int_{\sigma }^{r}\Psi ^h_{r,s}d\widehat{B}_{s}\right) dr\right)
^{2}d\sigma  \\
&=&E\int_{0}^{t}\int_{\sigma }^{t}\int_{\sigma }^{t}\Psi^h _{r,\sigma }\Psi
_{\rho ,\sigma }\left( \int_{\sigma }^{r}\Psi ^h_{r,s}d\widehat{B}_{s}\right)
\left( \int_{\sigma }^{\rho }\Psi ^h_{\rho ,s}d\widehat{B}_{s}\right) drd\rho
d\sigma  \\
&\leq &2E\int_{0}^{t}\int_{\sigma }^{t}\int_{\sigma }^{t}\left| \Psi
_{r,\sigma }\Psi^h _{\rho ,\sigma }\right| \left( \int_{\sigma }^{r}\Psi^h
_{r,s}d\widehat{B}_{s}\right) ^{2}drd\rho d\sigma  \\
&\leq &2\left\| \sup_{r}\left( \int_{0}^{r}\left| \Psi^h _{r,\sigma }\right|
d\sigma \right) \right\| _{a}\left\| \sup_{\sigma }\left( \int_{\sigma
}^{t}\left| \Psi^h _{\rho ,\sigma }\right| d\rho \right) \right\| _{b} \\
&&\times \left\| \left( \int_{0}^{t}\sup_{\sigma \leq r}\left(
\int_{\sigma }^{r}\Psi ^h_{r,s}d\widehat{B}_{s}\right) ^{2}dr\right)
\right\| _{c} 
\end{eqnarray*}%
with $\frac 1a +\frac 1b+ \frac 1c =1$.
Using lemmas \ref{lema1} and \ref{lema2} in the Appendix  we can show that the two first factors are  bounded by a constant times $h^{\frac{5}{2}  -\epsilon}$ for some   arbitrarily small $\epsilon>0$.   Using Doob's maximal inequality, the third
factor can be estimated by
\[
\sup_{0\le r\le t}   \left \| \int_0^r ( \Psi^h_{r,s} )^2 ds  \right\|_{\frac c2},
\]
and
\[
\int_0^r ( \Psi^h_{r,s} )^2 ds \le \sup _{x\in \mathbb{R}} \sup _{0\le s \le r \le t}
|A^h_{s, r} |^2 \int_0^r  (h- | B_r- B_s )_+^2   ds
\le  Ch^3 \sup _{x\in \mathbb{R}} \sup_ {0\le s \le r \le t}
|A^h_{s, r} |^2   \sup _{x\in \mathbb{R}} L_r^x.
\]
Finally, applying Lemma \ref{lema1},  we obtain
\[
h^{-8} E(R^2_{t,h})^{2} \le Ch ^{1- \delta}  
\]
for some   arbitrary small $\delta$. 
This completes the proof of Theorem \ref{thm1}.

\section{Appendix}
In this section we prove two technical results used in the paper.

\begin{lemma} \label{lema1}
Consider the random variable $A^h_{\sigma,r}$ introduced in  (\ref{l2}). Then,  for any $p\ge 2$ and $\epsilon \in (0, \frac p2)$ there exists a constant  $C_{t,p,\epsilon}  $ such that 
\[
E\sup_{x\in \mathbb{R}} \sup_{0\le \sigma \le r\le t} |A^h_{\sigma,r}(x)|^p
\le C_{t,p,\epsilon}   h^{\frac p2 -\epsilon}.
\]
\end{lemma}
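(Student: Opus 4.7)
The plan is to recognize $A^h_{\sigma,r}(x)$ as a forward Itô integral with respect to the time-reversed Brownian motion and then apply Tanaka's formula; the resulting pathwise expression will reduce the lemma to the classical H\"older-$(\tfrac12-\epsilon)$ estimate on the modulus of continuity of the Brownian local time in the space variable. First, fix $r\in[0,t]$ and set $W^{(r)}_u:=B_r-B_{r-u}$ for $u\in[0,r]$, which is a standard Brownian motion. Reversing time converts the backward It\^o integral into a forward one, namely
\[
A^h_{\sigma,r}(x)=\int_0^{r-\sigma}\mathbf{1}_{[x-h,\,x]}(W^{(r)}_u)\,dW^{(r)}_u.
\]
Writing $\mathbf{1}_{[x-h,x]}=\mathbf{1}_{(x-h,\infty)}-\mathbf{1}_{(x,\infty)}$ and applying Tanaka's formula to the convex functions $a\mapsto(a-x+h)^+$ and $a\mapsto(a-x)^+$, then subtracting, yields
\[
A^h_{\sigma,r}(x)=\bigl[(B_r-B_\sigma-x+h)^+-(B_r-B_\sigma-x)^+\bigr]-\bigl[(h-x)^+-(-x)^+\bigr]+\tfrac12\bigl(\widetilde L^{x,(r)}_{r-\sigma}-\widetilde L^{x-h,(r)}_{r-\sigma}\bigr),
\]
where $\widetilde L^{\cdot,(r)}$ denotes the local time of $W^{(r)}$. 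This is precisely the identity already used by the authors at the end of Step~2 of the proof of Theorem~\ref{thm1}, extended from $\sigma=0$ to general $\sigma\in[0,r]$.

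Next, by the invariance of occupation measures under time reversal one has $\widetilde L^{x,(r)}_{r-\sigma}=L^{B_r-x}_r-L^{B_r-x}_\sigma$, so
\[
\widetilde L^{x,(r)}_{r-\sigma}-\widetilde L^{x-h,(r)}_{r-\sigma}=-\bigl[(L^{B_r-x+h}_r-L^{B_r-x}_r)-(L^{B_r-x+h}_\sigma-L^{B_r-x}_\sigma)\bigr].
\]
Since each of the first two brackets in the Tanaka decomposition is a difference of $(\cdot)^+$ at arguments separated by $h$ and is therefore bounded by $h$ in absolute value, combining these observations gives the pathwise bound
\[
\sup_{x\in\mathbb{R}}\sup_{0\le\sigma\le r\le t}|A^h_{\sigma,r}(x)|\le 2h+\sup_{z\in\mathbb{R}}\sup_{0\le u\le t}|L^{z+h}_u-L^z_u|.
\]
Taking $p$-th moments, it then suffices to invoke the classical estimate
\[
E\sup_{z\in\mathbb{R}}\sup_{0\le u\le t}|L^{z+h}_u-L^z_u|^p\le C_{t,p,\epsilon}\,h^{p/2-\epsilon},
\]
valid for every $p\ge 2$ and $\epsilon\in(0,p/2)$, which follows from the H\"older-$(\tfrac12-\epsilon)$ regularity of the Brownian local time in the space variable (Barlow--Yor; see also Revuz--Yor, Ch.~VI).

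The only substantive step is the Tanaka representation itself, which rests on identifying the backward It\^o integral as a forward It\^o integral with respect to the time-reversed Brownian motion $W^{(r)}$. Once this identity is in hand, no chaining or multi-parameter Kolmogorov argument in $(\sigma,r,x)$ is needed: the bound is already pathwise uniform in these three parameters, and the $h^{p/2-\epsilon}$ decay is a direct consequence of the well-known spatial H\"older continuity of the Brownian local time.
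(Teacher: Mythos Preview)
Your proof is correct and follows essentially the same approach as the paper: apply Tanaka's formula via the time-reversed Brownian motion to obtain the pathwise bound $|A^h_{\sigma,r}(x)|\le 2h+\sup_{z\in\mathbb{R}}\sup_{0\le u\le t}|L_u^{z+h}-L_u^z|$, and then invoke the Barlow--Yor H\"older estimate for the spatial modulus of continuity of the local time. Your write-up is in fact somewhat more explicit than the paper's, spelling out the backward-to-forward It\^o conversion and the identification $\widetilde L^{x,(r)}_{r-\sigma}=L_r^{B_r-x}-L_\sigma^{B_r-x}$, but the underlying argument is identical.
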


\begin{proof}
By Tanaka's formula applied to the time reversed Brownian motion we can write
\begin{eqnarray*}
A_{ \sigma, r}(x)&=&-(B_r-B_\sigma +h) ^+   + (B_r-B_\sigma  ) ^+
+(B_r  -x+h) ^+   - (B_r-x  ) ^+  \\
&& +\frac 12 \left( L_\sigma^{B_r-x+h}  -L_\sigma^{B_r-x } -L_r^{B_r-x+h}+L_r^{B_r-x }\right) .
\end{eqnarray*}
Therefore,
\begin{equation}  \label{y2}
|A_{r,\sigma}(x)| \le 2h+\sup_{x\in \mathbb{R}}  \sup_{ 0\le r\le t} |L_r^{x+h} - L_r^{x }|.
\end{equation}
Finally, the result follows from the inequalities for the local time proved by Barlow and Yor in  \cite{BY}.
\end{proof}

\begin{lemma}  \label{lema2} Let $\delta^h_{r,\sigma} (x) $ be the random variable defined in  (\ref{l2}).  Then, for any $p\ge 2$ we there exists a constant $C_{t,p}$ such that for all $0\le s \le t$  
\[
E  \sup_{0\le \sigma \le s} \left |\int_s^t \int_0^h  \delta^h_{r,\sigma} (x) dxdr \right|^p \le h^{2p} C_{t,p} ( t-s) ^{\frac p2}.
\]
\end{lemma}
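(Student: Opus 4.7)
My plan is to reduce the double integral to an expression in the local time of $B$ and then invoke the standard H\"older-in-time moment bound for local time increments. The argument splits naturally into three short steps.

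First I would carry out the inner spatial integral pathwise. Since $\delta^h_{r,\sigma}(x) = \mathbf{1}_{[-h,0]}(B_r - B_\sigma - x)$ is the indicator of $x \in [B_r - B_\sigma,\ B_r - B_\sigma + h]$, the integral $\int_0^h \delta^h_{r,\sigma}(x)\,dx$ measures the length of the intersection of $[0,h]$ with its translate by $B_r - B_\sigma$. Both intervals have length $h$, so this intersection has length exactly $(h - |B_r - B_\sigma|)_+$, yielding
\[
\int_s^t \int_0^h \delta^h_{r,\sigma}(x)\,dx\,dr = \int_s^t \bigl(h - |B_r - B_\sigma|\bigr)_+ \,dr.
\]

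Next, applying the occupation times formula for $B$ with $f(y) = (h - |y - B_\sigma|)_+$ converts the time integral into a spatial one:
\[
\int_s^t \bigl(h - |B_r - B_\sigma|\bigr)_+ \,dr = \int_{\mathbb{R}} \bigl(h - |x - B_\sigma|\bigr)_+ (L_t^x - L_s^x)\,dx.
\]
Factoring out $\sup_{x \in \mathbb{R}}(L_t^x - L_s^x)$ and using $\int_{\mathbb{R}} (h - |y|)_+ \,dy = h^2$ gives the pathwise bound $h^2 \sup_{x \in \mathbb{R}} (L_t^x - L_s^x)$. Crucially, this bound is independent of $\sigma$, so it also dominates the supremum over $\sigma \in [0,s]$.

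The proof is then completed by the moment estimate
\[
E\Bigl[ \bigl( \sup_{x \in \mathbb{R}} (L_t^x - L_s^x) \bigr)^p \Bigr] \le C_p (t-s)^{p/2}.
\]
This follows by applying the Markov property at time $s$ and then Brownian scaling: $\sup_{x}(L_t^x - L_s^x)$ has the same law as $(t-s)^{1/2} \sup_x L_1^x$, and finiteness of all moments of $\sup_x L_1^x$ is precisely the Barlow--Yor bound already invoked in Lemma \ref{lema1}. I do not expect any genuine obstacle; the whole lemma is an elementary occupation-time reduction combined with a standard local-time moment bound, and the cleanness of the estimate comes from the fact that the pathwise bound obtained at step two no longer involves $\sigma$.
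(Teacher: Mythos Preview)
Your proposal is correct and follows essentially the same route as the paper's proof: compute the inner integral as $(h-|B_r-B_\sigma|)_+$, apply the occupation times formula to obtain the $\sigma$-independent bound $h^2\sup_x(L_t^x-L_s^x)$, and then use the Markov property and Brownian scaling together with the Barlow--Yor moment bound on $\sup_x L_1^x$. The only cosmetic difference is that the paper records the intermediate inequality $\sup_x(L_t^x-L_s^x)\le \sup_x\int_s^t\delta_x(B_u-B_s)\,du$ before invoking the distributional identity, which is exactly your Markov-property step written out.
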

\begin{proof}
We can write 
\begin{eqnarray}
    \int_s^t \int_0^h \delta^h _{r,\sigma} (x) dxdr &= & \int_s^t  (h-|B_r-B_\sigma|)^+ dr  \nonumber \\
   && =  \int_\mathbb{R} (L_t^x-L_s^x) (h-|x-B_\sigma|)^+ dx   \nonumber \\
   &&\le   h^2 \sup_{x} (L_t^x-L_s^x).    
 \end{eqnarray}
 Finally,
 \[
 \sup_{x} (L_t^x-L_s^x) \le \sup_{x}  \int_s^t \delta_x (B_u -B_s) du,
 \]
 and  $\int_s^t \delta_x (B_u -B_s) du$ has the same distribution as $L_{t-s} ^x$, or, by the scaling properties of the local time, as $\sqrt{t-s} L_1 ^{x/\sqrt{t-s}}$, so
 \[
 E   \sup_{x} (L_t^x-L_s^x)^p \le ( t-s) ^{\frac p2} E\sup_{x} (L_1^x)^p.
 \]
 \end{proof}

\end{document}